\numberwithin{equation}{section}
\newtheorem{theorem}{Theorem}[section]
\newtheorem{remark}[theorem]{Remark}
\newtheorem{example}[theorem]{Example}
\newtheorem{prop}[theorem]{Proposition}
\newtheorem*{aim-non}{Aim}
\newtheorem*{conjecture-non}{Conjecture}
\theoremstyle{definition}
\newtheorem{definition}[theorem]{Definition}
\begin{document}

	\title[Topological equivalences for $G$-bundles]{On some topological equivalences for moduli spaces of $G$-bundles}

	\author[S. Roy]{Sumit Roy}
\address{Stat-Math Unit, Indian Statistical Institute, 203 B.T. Road, Kolkata 700 108, India.}
 \email{sumitroy\_r@isical.ac.in}
\thanks{E-mail: sumit.roy061@gmail.com}
\thanks{Address: Stat-Math Unit, Indian Statistical Institute, 203 B.T. Road, Kolkata 700 108, India.}
\subjclass[2020]{14C30, 14D20, 14F35, 70G45, 14H60, 57R22}
\keywords{Higgs bundles, holomorphic connections, mixed Hodge structures, Homotopy groups}

 \begin{abstract}
     Let $X$ be a smooth projective curve of genus $g \geq 3$, and let $G$ be a nontrivial connected reductive affine algebraic group over $\mathbb{C}$. Examining the moduli spaces of regularly stable $G$-bundles and holomorphic $G$-connections with a fixed topological type $d\in \pi_1(G)$ over $X$, we establish that the $k$-th homotopy groups of these two moduli spaces are isomorphic for $k \leq 2g-4$. Lastly, we explicitly describe the homotopy groups of the moduli space of $\mathrm{SL}(n,\mathbb{C})$-connections over $X$. We also prove that the mixed Hodge structure on the cohomology groups (modulo torsion) with integer coefficients on the moduli spaces of $\mathrm{SL}(n,\mathbb{C})$-bundles and of $\mathrm{SL}(n,\mathbb{C})$-connections are pure and isomorphic.
 \end{abstract}

\maketitle

\section{Introduction}
Let $X$ be a smooth projective curve (or a compact Riemann surface) of genus $g \geq 3$, and let $G$ be a nontrivial connected reductive affine algebraic group over $\mathbb{C}$. In this article, we are interested in the moduli spaces of semistable $G$-Higgs bundles and holomorphic $G$-connections of a fixed topological type $d \in \pi_1(G)$ over $X$, denoted by $\mathcal{M}^d_{\mathrm{Higgs}}(G)$ and $\mathcal{M}^d_{\mathrm{conn}}(G)$, respectively. Notably, these moduli spaces are not smooth in general. However, the regularly stable locus of $\mathcal{M}^d_{\mathrm{Higgs}}(G)$, where the automorphism group of the $G$-Higgs bundle coincides with the center of $G$, is non-singular (same is true for $\mathcal{M}^d_{\mathrm{conn}}(G)$).

Simpson, in \cite{Si94a}, introduced a family termed the Hodge moduli space over $\mathbb{C}$, with fibers over $0$ and $1$ precisely corresponding to the moduli of Higgs bundles and holomorphic connections, respectively. He established a homeomorphism between the moduli space of Higgs bundles and holomorphic connections, known as the non-abelian Hodge correspondence (refer to \cite{S92}, \cite{Si94a}, \cite{S94}). While these moduli spaces generally possess singularities, when considering cases where the rank and degree are coprime, they exhibit smoothness. In \cite[Theorem 6.2]{HT03}, Hausel and Thaddeus specifically examined the coprime rank and degree scenario, establishing the pure Hodge structure of these moduli spaces. 

 A principal $G$-bundle $E$ is called \textit{regularly stable} if it is stable and $\mathrm{Aut}(E) = Z(G)$, where $Z(G)$ is the center of $G$. In \cite{BH12}, Biswas and Hoffmann showed that the moduli space $\mathcal{M}^{d,rs}(G)$ of regularly stable $G$-bundles is the smooth locus of $\mathcal{M}^{d}(G)$. Similalrly, we can define the regularly stable $G$-connection. Consider the subset
    \[
\mathcal{U}^{d,rs}_\mathrm{conn}(G) \subset \mathcal{M}^{d,rs}_\mathrm{conn}(G)
    \]
  which consists of regularly stable $G$-connections whose underlying $G$-bundle is regularly stable. Then consider the forgetful map
  \begin{equation*}
       p: \mathcal{U}^{d,rs}_\mathrm{conn}(G) \longrightarrow \mathcal{M}^{d,rs}(G)
  \end{equation*}
which forgets the connection structure on the $G$-bundle. Using this map $p$, we establish some topological invariances. For instance, in Theorem \ref{maintheorem}, we show that
\[
   \pi_k(\mathcal{M}^{d,rs}_\mathrm{conn}(G)) \cong \pi_k(\mathcal{M}^{d,rs}(G))
\]
for all $k=0,\dots, 2g-4$. Then we consider the moduli space $\mathcal{M}^d_\mathrm{Hod}(G)$, which consists of triples $(E_G,\lambda,\nabla)$, where $\lambda \in \mathbb{C}$, $E$ is a $G$-bundle of topological type $d\in \pi_1(G)$ over $X$, and $\nabla$ is a semistable $\lambda$-connection on $E$ (see \cite{BGH13} for more details) and let
\[
\mathcal{M}^{d,rs}_\mathrm{Hod}(G) \subset \mathcal{M}^d_\mathrm{Hod}(G)
\]
be the regularly stable locus of triples $(E,\lambda,\nabla)$.

In section \ref{lastsection}, we restrict our attention to $G=\mathrm{GL}_n$ and $\mathrm{SL}_n$. In this case, the notion of regularly stable and stable coincides. Therefore, if the rank $n$ and degree $d$ are coprime, then the moduli spaces of semistable $G$-Higgs bundles and of semistable $G$-connections are smooth. In Theorem \ref{homotopy}, we explicitly compute the homotopy groups of the moduli space of $\mathrm{SL}(n,\mathbb{C})$-connections.  Using the smoothness of 
\begin{align*}
\begin{split}
\pi: \mathcal{M}^{d,rs}_\mathrm{Hod}(G) &\longrightarrow \mathbb{C}\\
(E,\lambda,\nabla) &\longmapsto \lambda,
\end{split}
\end{align*}
we prove that the mixed Hodge structures on the cohomology groups (with rational coefficients) of $\mathcal{M}^{d,rs}_\mathrm{Higgs}(G)$ and $\mathcal{M}^{d,rs}_\mathrm{conn}(G)$ are pure and isomorphic for $G=\mathrm{SL}(n,\mathbb{C})$. In Theorem \ref{integer}, we prove that the mixed Hodge structure on the cohomology groups (modulo torsion) with integer coefficients on the moduli spaces of $\mathrm{SL}(n,\mathbb{C})$-bundles and of $\mathrm{SL}(n,\mathbb{C})$-connections are pure and isomorphic.

\section{Preliminaries}
\subsection{Principal $G$-bundles}
Let $G$ be a connected reductive complex affine algebraic group with Lie algebra $\mathfrak{g}=\mathrm{Lie}(G)$. Let $X$ be a smooth projective curve of genus $g\geq 3$.
\begin{definition}
     A \textit{holomorphic principal $G$-bundle} $\pi: E \to X$ with structure group $G$ (shortly, a $G$-bundle) is a complex manifold $E$, equipped with a holomorphic right $G$-action that is free, such that the holomorphic projection map $\pi$ is $G$-equivariant ($X$ is equipped with the trivial action).
\end{definition}
  It is well known that $H^1(X,\underline{G})$ parameterizes the set of isomorphism classes of principal $G$-bundles on $X$, where $\underline{G}$ is the natural sheaf induced by $G$ on $X$. This leads to the well-known case of vector bundles when $G = GL(n,\mathbb{C})$. The adjoint action
  \begin{align*}
	    	\mathrm{ad} : G \longrightarrow \operatorname{End}(\mathfrak{g}).
\end{align*}
  of $G$ on $\mathfrak{g}$, together with the right $G$-action on $E$ gives a $G$-action on $E\times \mathfrak{g}$, defined by
  \[
	(v,\xi)\cdot g = (v\cdot g, \mathrm{ad}(g^{-1})(\xi)), \hspace{0.2cm} \forall \hspace{0.1cm} (v,\xi)\in E\times \mathfrak{g},\hspace{0.1cm} g\in G.
	\] 
We denote the associated \textit{adjoint bundle} over $X$ by
\[
\mathrm{ad}(E) \coloneqq E \times^G \mathfrak{g} \coloneqq (E \times \mathfrak{g})/G.
\]

\begin{definition}
    A principal $G$-bundle $E$ is said to be \textit{stable} (or \textit{semistable}) if for each maximal parabolic subgroup $P \subset G$ and any holomorphic reduction $E_P$ that reduces the structure group of $E$ to $P$,
    \[
	\deg(\mathrm{ad}(E_P)) < 0 \hspace{0.2cm} (\mathrm{respectively,} \hspace{0.2cm} \leq 0 \hspace{0.05cm})
	\]
 where $\mathrm{ad}(E_P) \subset \mathrm{ad}(E)$ denotes the adjoint bundle of $E_P$.

 A principal $G$-bundle $E$ is called \textit{regularly stable} if it is stable and the natural map
 \[
 Z(G) \longrightarrow \mathrm{Aut}(E)
 \]
 given by the action of $G$ on $E$ is indeed an isomorphism, i.e. 
 \[
 \mathrm{Aut}(E) \cong Z(G).
 \]
\end{definition}

For a principal $G$-bundle $E$, its topological type corresponds to an element of the finite abelian group $\pi_1(G)$ (see \cite{R75}). The moduli space $\mathcal{M}^d(G)$ of semistable holomorphic principal $G$-bundles over $X$ of topological type $d\in \pi_1(G)$ is an irreducible normal projective variety over $\mathbb{C}$ of dimension
\[
\dim \mathcal{M}^d(G) = (g-1)\cdot \dim_{\mathbb{C}}G + \dim_{\mathbb{C}} Z(G)
\]
(see \cite{R75, R96}). This moduli space has singularities in general (even the stable locus has singularities). But the regularly stable locus $$\mathcal{M}^{d, rs}(G) \subset \mathcal{M}^d(G)$$ is the smooth open locus of the moduli space $\mathcal{M}^d(G)$. Therefore, we will restrict our attention to the regularly stable locus $\mathcal{M}^{d, rs}(G)$ (see \cite[Corollary $3.4$]{BH12}.

\begin{remark}\label{remark}
For $G=\mathrm{GL}(n,\mathbb{C})$, the topological type $d\in \pi_1(G)$ corresponds to the degree $d$ of the vector bundle (or, principal $\mathrm{GL}(n,\mathbb{C})$-bundle). Also, in this case, the stable locus is precisely the smooth locus of $\mathcal{M}^d(\mathrm{GL(n,\mathbb{C})})$. This is also true for the moduli space $\mathcal{M}^d(\mathrm{SL(n,\mathbb{C})})$ of $\mathrm{SL(n,\mathbb{C})}$-bundles.    
\end{remark} 

\subsection{Principal $G$-Higgs bundles}
Let $K_X$ denote the cotangent bundle of the Riemann surface $X$.
\begin{definition}
A principal \textit{$G$-Higgs bundle} over a Riemann surface $X$ is defined as a pair $(E,\varphi)$, where $E$ represents a holomorphic principal $G$-bundle and 
    \[
    \varphi \in H^0(X, \mathrm{ad}(E_G) \otimes K_X)
    \]
    is a holomorphic section, referred to as the \textit{Higgs field} \cite{H87, S92}. 
\end{definition}
\begin{example}
    A principal $\mathrm{GL}(n,\mathbb{C})$-Higgs bundle over $X$ is a pair $(E,\varphi)$, which consists of a holomorphic vector bundle $E$ of rank $n$ and a holomorphic section $\varphi \in H^0(X, \mathrm{End}(E) \otimes K_X)$. This fundamental concept of the Higgs bundle was initially introduced by Hitchin \cite {H87a}.
    
    Similarly, a  $\mathrm{SL}(n,\mathbb{C})$-Higgs bundle over $X$ is a pair $(E,\varphi)$, which consists of a holomorphic vector bundle $E$ of rank $n$ with $\det (E) = \mathcal{O}$ and $\varphi \in H^0(X, \mathrm{End}(E) \otimes K_X)$ a holomorphic section with $\mathrm{trace}(\varphi)=0$.
\end{example}

\begin{definition}
    A principal \textit{$G$-Higgs bundle} $(E,\varphi)$ is said to be \textit{semistable} (respectively, \textit{stable}) if for every holomorphic reduction $E_P$ of the structure group of $E$ to a $\varphi$-invariant maximal parabolic subgroup $P\subsetneq G$, i.e.  $\varphi \in H^0(X, \mathrm{ad}(E_P) \otimes K_X)$ we have
\[
	\deg(\mathrm{ad}(E_P)) \leq 0 \hspace{0.2cm} (\mathrm{respectively,} \hspace{0.2cm} < 0 \hspace{0.05cm}).
\]
A principal $G$-Higgs bundle $(E,\varphi)$ over $X$ is said to be \textit{regularly stable} if it is stable and the automorphism group of $(E,\varphi)$ coincides with the center of $G$.
\end{definition}

The moduli space $\mathcal{M}^d_{\mathrm{Higgs}}(G)$ of semistable $G$-Higgs bundles of fixed topological type $d\in \pi_1(G)$ is an irreducible normal quasi-projective variety of dimension
\[
\dim \mathcal{M}^d_{\mathrm{Higgs}}(G) = 2(g-1)\cdot \dim_{\mathbb{C}}G + 2 \dim_{\mathbb{C}}Z(G)
\]
(see \cite{S94} for more details). Moreover, the regularly stable locus $\mathcal{M}^{d,rs}_{\mathrm{Higgs}}(G) \subset \mathcal{M}^d_{\mathrm{Higgs}}(G)$ is open and smooth.  The tangent space of $\mathcal{M}^{d,rs}(G)$ at the point $E$ can be identified as isomorphic to $H^1(X, \mathrm{ad}(E))$ using deformation theory. Employing Serre duality, we can establish that
\[
H^0(X, \mathrm{ad}(E) \otimes K_X) \cong H^1(X, \mathrm{ad}(E))^*.
\]
Consequently, the cotangent bundle of $\mathcal{M}^{d,rs}_{\mathrm{Higgs}}(G)$, denoted as $$T^*\mathcal{M}^{d,rs}(G) \subset \mathcal{M}^{d,rs}_{\mathrm{Higgs}}(G),$$ forms an open and dense subvariety within $\mathcal{M}^{d,rs}_{\mathrm{Higgs}}(G)$. As a result, we obtain
\[
\dim \mathcal{M}^{d,rs}_{\mathrm{Higgs}}(G) = 2\dim \mathcal{M}^{d,rs}(G).
\]

This moduli space $\mathcal{M}^d_{\mathrm{Higgs}}(G)$ is equipped with a natural $\mathbb{C}^*$-action, given by
\begin{equation}\label{actionHiggs}
    t\cdot (E,\varphi) \coloneqq (E, t\varphi).
\end{equation}

\subsection{Holomorphic $G$-conections}
Let $q: E \to X$ represent the projection morphism from the total space of a $G$-bundle $E$ to $X$. For any open subset $U \subset X$, consider the space $\mathcal{F}(U)$ consisting of $G$-equivariant vector fields on $q^{-1}(U)$. Let $\mathcal{F}$ be the coherent sheaf on $X$ associating each $U$ with the vector space $\mathcal{F}(U)$. The vector bundle corresponding to the sheaf $\mathcal{F}$ is called the \textit{Atiyah bundle} for the $G$-bundle $E$ and is denoted as $\mathrm{At}(E)$ (refer to \cite{A57}). Essentially, it is obtained as the quotient
\[
 \mathrm{At}(E) \coloneqq (TE)/G
\]
where $TE$ denotes the (holomorphic) tangent bundle of $E$; thus, the Atiyah bundle $\mathrm{At}(E)$ forms a (holomorphic) vector bundle over $E/G = X$. Consequently, there exists an exact sequence of vector bundles
\begin{equation}\label{Atiyah}
0 \longrightarrow \mathrm{ad}(E) \longrightarrow \mathrm{At}(E) \overset{\tau}\longrightarrow TX \longrightarrow 0,
\end{equation}
where $TX$ represents the tangent bundle of $X$. The surjective morphism $\tau: \mathrm{At}(E) \to TX$ is constructed using the differential $dq$ of $$q: E \longrightarrow X.$$ It's important to note that the adjoint bundle $\mathrm{ad}(E)$ is the holomorphic subbundle of the tangent bundle $TE$ defined by the kernel of the differential $dq$. The short exact sequence (\ref{Atiyah}) above is called the \textit{Atiyah exact sequence} for the $G$-bundle $E$.

A \textit{holomorphic connection} on a $G$-bundle $E$ can be described as a holomorphic splitting of the above Atiyah exact sequence (\ref{Atiyah}), i.e., a holomorphic morphism 
\[ 
\mathcal{\nabla} : TX \longrightarrow \mathrm{At}(E) 
\]
satisfying $$\tau \circ \mathcal{\nabla} = \mathrm{id}_{TX}$$ within the context of the Atiyah exact sequence (\ref{Atiyah}). If $\mathcal{\nabla}'$ stands as another holomorphic splitting of (\ref{Atiyah}), then $$\mathcal{\nabla}-\mathcal{\nabla}': TX \longrightarrow \mathrm{ad}(E)$$ is a holomorphic holomorphism. Conversely, for any holomorphic section $$s \in H^0(X, K_X \otimes \mathrm{ad}(E)),$$ if $\mathcal{\nabla}$ is a holomorphic splitting of (\ref{Atiyah}), then so is $\mathcal{D}+s$. Consequently, the space of all holomorphic connections on the principal $G$-bundle $E$ is an affine space for $H^0(X, K_X \otimes \mathrm{ad}(E))$.

Since $\dim_\mathbb{C}(X)=1$, any holomorphic connection on $E$ inherently becomes a flat connection on $E$ that is compatible with its holomorphic structure. Since the space of all extensions of $TX$ by $\mathrm{ad}(E)$ is parametrized by $H^1(X, K_X \otimes \mathrm{ad}(E))$, the requisite condition for the existence of a flat connection on the principal $G$-bundle $E$ is equivalent to asserting that if $\beta \in H^1(X, K_X \otimes \mathrm{ad}(E))$ corresponds to the Atiyah sequence (\ref{Atiyah}), then $\beta=0$ (see \cite{AB02} for more details). Following \cite[Theorem $4.1$]{AB02}, we get that a holomorphic connection on the $G$-bundle $E$ always exists if $E$ is semistable. 

\begin{definition} 
A \textit{holomorphic $G$-connection} is defined as a pair $(E,\mathcal{\nabla})$ where $E$ is a principal $G$-bundle, and $\mathcal{\nabla}$ is a holomorphic connection on $E$. 
\end{definition}

Since a vector bundle with a flat connection has degree zero, a holomorphic $G$-connection $(E, \mathcal{\nabla})$ is always semistable. Let $\mathcal{M}^d_\mathrm{conn}(G)$ be the moduli space of holomorphic $G$-connections with a fixed topological type $d\in \pi_1(G)$ over $X$. It is a normal irreducible quasi-projective complex variety of dimension 
\[
\dim \mathcal{M}^d_{\mathrm{conn}}(G)= 2(g-1)\cdot \dim_{\mathbb{C}} G + 2 \dim_{\mathbb{C}}Z(G)
\]
(see \cite{BGH13} for details). By \cite{BGH13}, the regularly stable locus 
\[
\mathcal{M}^{d,rs}_{\mathrm{conn}}(G) \subset \mathcal{M}^d_{\mathrm{conn}}(G)
\]
is open and smooth.

\begin{definition} A $\textit{holomorphic } \lambda$\textit{-connection} ($\lambda \in \mathbb{C}$) on a $G$-bundle $E$ over $X$ is defined as a holomorphic morphism of vector bundles
\[
\nabla : TX \longrightarrow \mathrm{At}(E)
\]
such that $$\tau \circ \nabla = \lambda\cdot \mathrm{id}_{TX},$$ where $\tau$ is as in the Atiyah sequence (\ref{Atiyah}).
\end{definition}

It can be checked that for $\lambda \neq 0$, a holomorphic $G$-connection $(E,\nabla)$ is always semistable (as if $\nabla$ is a $\lambda$-connection on $E$, then $\lambda^{-1}\nabla$ is also a holomorphic $G$-connection on $E$). Consider the moduli space $\mathcal{M}^d_\mathrm{Hod}(G)$, which consists of triples $(E_G,\lambda,\nabla)$, where $\lambda \in \mathbb{C}$, $E$ is a $G$-bundle of topological type $d\in \pi_1(G)$ over $X$, and $\nabla$ is a semistable $\lambda$-connection on $E$ (refer to \cite{BGH13},\cite{S94} for details). Let
\[
\mathcal{M}^{d,rs}_\mathrm{Hod}(G) \subset \mathcal{M}^d_\mathrm{Hod}(G)
\]
be the regularly stable locus of triples $(E,\lambda,\nabla)$, and as before, it is an open and smooth subvariety of $\mathcal{M}^d_\mathrm{Hod}(G)$. There exists a canonical surjective algebraic morphism
\begin{align}\label{proj}
\begin{split}
\pi: \mathcal{M}^d_\mathrm{Hod}(G) &\longrightarrow \mathbb{C}\\
(E,\lambda,\nabla) &\longmapsto \lambda,
\end{split}
\end{align}
whose fiber $\pi^{-1}(0)$ over the origin $0 \in \mathbb{C}$ corresponds to the moduli space of semistable principal $G$-Higgs bundles over $X$, i.e.
\[
\mathcal{M}^d_{\mathrm{Higgs}}(G) = \pi^{-1}(0) \subset \mathcal{M}^d_\mathrm{Hod}(G). 
\]
When $\lambda = 1$, we get
\[
\mathcal{M}^d_{\mathrm{conn}}(G) = \pi^{-1}(1)  \subset \mathcal{M}^d_\mathrm{Hod}(G).
\]
If we restrict $\pi$ to the regularly stable locus
\begin{equation}\label{rsproj}
    \pi: \mathcal{M}^{d, rs}_\mathrm{Hod}(G) \longrightarrow \mathbb{C}
\end{equation}
(we denote it by $\pi$ again for notational convenience), we get
\[
\pi^{-1}(0) \cong \mathcal{M}^{d, rs}_{\mathrm{Higgs}}(G)
\]
and 
\[
\pi^{-1}(1) \cong \mathcal{M}^{d, rs}_{\mathrm{conn}}(G).
\]
The natural $\mathbb{C}^*$-action (\ref{actionHiggs}) on the moduli space $\mathcal{M}^d_{\mathrm{Higgs}}(G)$ can be extended to a $\mathbb{C}^*$-action on the Hodge moduli space $\mathcal{M}^d_\mathrm{Hod}(G)$ defined by
\begin{equation}\label{action}
t\cdot (E,\lambda,\nabla) \coloneqq (E,t\lambda,t\nabla).
\end{equation}
Then, it is straightforward to see that the surjective submersion $\pi$ given in (\ref{proj}) is $\mathbb{C}^*$-invariant for the natural $\mathbb{C}^*$-action on $\mathbb{C}$.

\subsection{Mixed Hodge structures}
\begin{definition}
    A \textit{mixed Hodge structure} on a finitely generated free $\mathbb{Z}$-module $H_\mathbb{Z}$ is endowed with:
    \begin{enumerate}
        \item an increasing filtration (weight filtration)
        \[
        0=W_0 \subset W_1 \subset \cdots \subset W_k=H_\mathbb{Q} \coloneqq H_\mathbb{Z} \otimes \mathbb{Q}
        \]
        \item a decreasing filtration (the Hodge filtration)
        \[
           H \coloneqq H_\mathbb{Z}\otimes \mathbb{C} = F^0 \supset F^1 \supset F^2 \cdots \supset F^l=0
        \]
        induced by $F^\bullet$ on the complexified graded components of $W_\bullet$ providing each graded component $\mathrm{Gr}^pW_\bullet \coloneqq W_p/W_{p-1}$ with a pure Hodge structure of weight $p$, i.e. , for all $0\leq k \leq p$ we have
        \[
        \mathrm{Gr}^pW_\bullet^\mathbb{C} = F^k\mathrm{Gr}^pW_\bullet^\mathbb{C} \oplus \overline{F^{p-k+1}\mathrm{Gr}^pW_\bullet^\mathbb{C}}.
        \]
    \end{enumerate}
\end{definition}
Deligne in \cite{D72, D75} showed that the cohomology of a complex algebraic variety is equipped with a mixed Hodge structure. He proved the following:
\begin{prop}[\cite{D72}, \cite{D75}]
    Let $Y$ be a complex algebraic variety. Then a mixed Hodge structure exists on the cohomology $H^i(Y,\mathbb{C})$. Furthermore, the weight filtration is given by
    \[
    0=W_{-1} \subset W_0 \subset \cdots \subset_{2i} = H^(Y,\mathbb{Q})
    \]
    and the Hodge filtration is given by
    \[
    H^i(Y,\mathbb{C}) =F^0 \supset F^1 \supset \cdots \supset F^{i}\supset F^{i+1} = 0.
    \]
   \end{prop}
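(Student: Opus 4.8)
The statement is Deligne's fundamental theorem, so the plan is to follow his original strategy of constructing the mixed Hodge structure in three stages of increasing generality, using resolution of singularities to reduce each case to one where an explicit de Rham model is available. Throughout, the natural language is that of (cohomological) mixed Hodge complexes: one equips a complex computing $H^i(Y,\mathbb{C})$ with a weight filtration $W_\bullet$ and a Hodge filtration $F^\bullet$ satisfying compatibility and degeneration axioms, and then shows these descend to the asserted filtrations on cohomology.

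First I would treat the case where $Y$ is smooth and projective. Here classical Hodge theory supplies everything: harmonic analysis yields the decomposition $H^i(Y,\mathbb{C}) = \bigoplus_{p+q=i} H^{p,q}$, so setting $F^p = \bigoplus_{r\geq p} H^{r,i-r}$ gives a pure Hodge structure of weight $i$, with trivial weight filtration $W_{i-1}=0 \subset W_i = H^i$. Equivalently, $F^\bullet$ is induced by the stupid (bête) filtration on the holomorphic de Rham complex $\Omega^\bullet_Y$, and it is the degeneration of the Hodge-to-de Rham spectral sequence at $E_1$ that makes this a genuine Hodge structure. Next I would handle $Y$ smooth but possibly non-compact. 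By Hironaka, choose a smooth projective compactification $\overline{Y}\supset Y$ with $D=\overline{Y}\setminus Y$ a simple normal crossings divisor. Then $H^i(Y,\mathbb{C})$ is computed by the logarithmic de Rham complex $\Omega^\bullet_{\overline{Y}}(\log D)$, which carries the Hodge filtration $F^\bullet$ (again the bête filtration) and an increasing weight filtration $W_\bullet$ recording the order of logarithmic poles. Via the residue map, the graded pieces $\mathrm{Gr}^W_m$ are identified with the cohomology of the smooth projective strata $D^{(m)}$ (the disjoint unions of $m$-fold intersections of components of $D$), which are pure by the first step. The real content is the degeneration of the weight spectral sequence at $E_2$ and of the Hodge spectral sequence at $E_1$; together these produce a well-defined mixed Hodge structure on $H^i(Y,\mathbb{C})$ and pin the weights into the range $i \leq \cdot \leq 2i$.

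Finally, for an arbitrary complex algebraic variety $Y$ (singular, reducible, or non-compact), I would reduce to the smooth case through a simplicial (or cubical) hyperresolution $Y_\bullet \to Y$, built by iterating resolution of singularities so that each $Y_n$ is smooth. The mixed Hodge complexes attached to the $H^\bullet(Y_n)$ in the previous step assemble along the simplicial structure into a single mixed Hodge complex for $Y$; Deligne's formalism then guarantees that the induced filtrations on $H^i(Y,\mathbb{C})$ form a mixed Hodge structure, now with weights in the full range $0 = W_{-1} \subset W_0 \subset \cdots \subset W_{2i} = H^i(Y,\mathbb{Q})$, exactly as stated, and with the Hodge filtration satisfying $H^i = F^0 \supset \cdots \supset F^{i+1}=0$.

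The hard part, at every stage, is not writing down the two filtrations but proving that the construction is \emph{intrinsic}, i.e.\ independent of the chosen compactification and hyperresolution, and that the relevant spectral sequences degenerate so that the filtrations behave strictly under the maps involved. This is precisely the difficulty that Deligne's axiomatic notion of a mixed Hodge complex is engineered to resolve: once a given complex is shown to satisfy the axioms, intrinsicality of the resulting mixed Hodge structure on $H^i(Y)$ and the strictness/degeneration properties follow formally, so the entire burden of the proof is transferred to verifying those axioms in the smooth projective and logarithmic settings above.
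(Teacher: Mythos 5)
The paper does not actually prove this proposition --- it is quoted with a citation to Deligne's \emph{Th\'eorie de Hodge II} and \emph{III}, and your three-stage sketch (classical Hodge theory in the smooth projective case; the logarithmic de Rham complex with the b\^ete Hodge filtration and pole-order weight filtration, residues, and the $E_1$/$E_2$ degenerations in the smooth open case; simplicial hyperresolutions assembled via the mixed Hodge complex formalism in general) is precisely a correct outline of that cited argument, including the key point that the real work lies in intrinsicality and strictness. Your proposal is therefore correct and takes essentially the same route as the source the paper relies on, up to minor elided details such as the d\'ecalage shift relating the filtration on the log complex to the weight filtration on cohomology.
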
 
    The compactly supported cohomology $H^*_c(Y,\mathbb{Q})$ also has a mixed Hodge structure. Also, there is a Poincar\'e duality in the context of mixed Hodge structure
  \begin{prop}[Poincar\'e duality]\label{poincare} Let $Y$ be a smooth connected complex algebraic variety of dimension $d$. Then the Poincar\'e duality is established as follows:
    \[
    H^i(Y,\mathbb{Q}) \times H^{2d-i}_c(Y,\mathbb{Q}) \longrightarrow H^{2d}_c(Y,\mathbb{Q}) \cong \mathbb{Q}(-d)
    \]
   and this duality is consistent with mixed Hodge structures. Here $\mathbb{Q}(-d)$ denotes the pure mixed structure on $\mathbb{Q}$ of weight $2d$ and a Hodge filtration such that $F^d=\mathbb{Q}$ and $F^{d+1}=0$.
\end{prop}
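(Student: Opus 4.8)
The plan is to reduce the statement to classical topological Poincaré duality on the underlying real manifold and then upgrade the pairing to a morphism of mixed Hodge structures using Deligne's formalism. The only genuinely new content is Hodge-theoretic: that the topological duality pairing respects the weight and Hodge filtrations once the target is Tate-twisted.

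First I would recall classical Poincaré duality. Since $Y$ is a smooth complex variety of complex dimension $d$, its underlying real manifold is orientable of real dimension $2d$, and cup product followed by the trace (integration) map yields a perfect pairing
\[
H^i(Y,\mathbb{Q}) \times H^{2d-i}_c(Y,\mathbb{Q}) \longrightarrow H^{2d}_c(Y,\mathbb{Q}) \cong \mathbb{Q}.
\]
This establishes both the nondegeneracy and the isomorphism of the target with $\mathbb{Q}$ at the level of rational vector spaces. By Deligne's theorem quoted above, each of $H^i(Y,\mathbb{Q})$ and $H^{2d-i}_c(Y,\mathbb{Q})$ carries a mixed Hodge structure, so it remains to verify compatibility of the pairing with these structures.

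Next I would invoke the multiplicativity of Deligne's mixed Hodge complexes: the cup product
\[
H^i(Y,\mathbb{Q}) \otimes H^{2d-i}_c(Y,\mathbb{Q}) \longrightarrow H^{2d}_c(Y,\mathbb{Q})
\]
is a morphism of mixed Hodge structures. This is not merely the abstract existence of filtrations; it follows because the (logarithmic) de Rham complex computing $H^*(Y)$ and the complex computing $H^*_c(Y)$ assemble into compatible mixed Hodge complexes whose product is filtered-compatible. To identify the target, I would fix a smooth compactification $\overline{Y}$ of $Y$ with simple normal crossings boundary (Hironaka); the natural map $H^{2d}_c(Y,\mathbb{Q}) \to H^{2d}(\overline{Y},\mathbb{Q})$ is an isomorphism of mixed Hodge structures, and the right-hand side is the top cohomology of a smooth projective variety, hence pure of weight $2d$ and of Hodge type $(d,d)$. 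Therefore $H^{2d}_c(Y,\mathbb{Q}) \cong \mathbb{Q}(-d)$, with $F^d = \mathbb{Q}$ and $F^{d+1}=0$, exactly as asserted.

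Combining these steps, the Poincaré pairing is the cup product, which is a morphism of mixed Hodge structures landing in $\mathbb{Q}(-d)$, and it is perfect by classical duality; hence it is a perfect pairing of mixed Hodge structures. The main obstacle is the middle step, namely verifying that the cup product into compactly supported cohomology respects the filtrations. This genuinely requires Deligne's construction of the mixed Hodge structures via multiplicative mixed Hodge complexes rather than the bare existence statement, since the compatibility of the trace map with the weight grading --- which forces the Tate twist $\mathbb{Q}(-d)$ --- is the crux of the assertion.
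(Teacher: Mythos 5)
Your proposal is correct and takes essentially the same route as the paper: the paper offers no independent argument, deferring entirely to Peters--Steenbrink \cite{PS08}, and your sketch --- classical topological duality on the oriented $2d$-manifold, multiplicativity of Deligne's mixed Hodge complexes making the cup product $H^i(Y,\mathbb{Q}) \otimes H^{2d-i}_c(Y,\mathbb{Q}) \to H^{2d}_c(Y,\mathbb{Q})$ a morphism of mixed Hodge structures, and the identification $H^{2d}_c(Y,\mathbb{Q}) \cong H^{2d}(\overline{Y},\mathbb{Q}) \cong \mathbb{Q}(-d)$ via a smooth compactification --- is precisely the standard proof given in that reference. Your steps are all sound; in particular the map $H^{2d}_c(Y,\mathbb{Q}) \to H^{2d}(\overline{Y},\mathbb{Q})$ is indeed an isomorphism because the boundary has real dimension at most $2d-2$, and connectedness of $Y$ is exactly what guarantees the target is one-dimensional.
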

\begin{proof}
    See \cite{PS08} for a proof.
\end{proof}

\section{Topological Invariances}

Recall that $X$ is a smooth projective curve of $g\geq 3$, $G$ a connected reductive affine algebraic group over $\mathbb{C}$ and $\mathcal{M}^d(G)$ denote the moduli space of semistable $G$-bundles with regularly stable locus $\mathcal{M}^{d, rs}(G)$.
\begin{theorem}
    The natural map
    \[
    \pi_1(\mathcal{M}^{d,rs}(G)) \longrightarrow \pi_1(\mathcal{M}^d(G))
    \]
    which is induced by the inclusion $\mathcal{M}^{d,rs}(G) \xhookrightarrow{} \mathcal{M}^d(G)$ is surjective.
\end{theorem}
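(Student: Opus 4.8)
The plan is to exploit that $\mathcal{M}^{d,rs}(G)$ is a nonempty Zariski-open dense subset of the irreducible variety $\mathcal{M}^d(G)$ whose complement is small, and then to run a general-position argument showing that every loop in $\mathcal{M}^d(G)$ can be pushed into this open locus, rel basepoint.

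First I would set up the geometry. By Biswas--Hoffmann \cite[Corollary 3.4]{BH12}, the locus $\mathcal{M}^{d,rs}(G)$ coincides with the smooth locus of $\mathcal{M}^d(G)$, so its complement $\Sigma \coloneqq \mathcal{M}^d(G) \setminus \mathcal{M}^{d,rs}(G)$ is exactly the singular locus. Since $\mathcal{M}^d(G)$ is a normal variety, it is regular in codimension one, and hence $\Sigma$ is a closed subvariety of complex codimension at least two; in particular it has real codimension $\geq 4$ (for the argument, real codimension $\geq 2$ already suffices). Because $\mathcal{M}^d(G)$ is irreducible it is connected and $\mathcal{M}^{d,rs}(G)$ is dense, so I fix a basepoint $y_0 \in \mathcal{M}^{d,rs}(G)$ and use it as the common basepoint for both fundamental groups.

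The core step is the general fact that, for a complex algebraic variety $Y$ and a closed subvariety $Z \subset Y$ of real codimension $\geq 2$, the inclusion $Y \setminus Z \hookrightarrow Y$ induces a surjection on $\pi_1$ with basepoint in $Y \setminus Z$. To apply this to our pair I would choose a Whitney stratification of $\mathcal{M}^d(G)$ in which $\Sigma$ is a union of strata, each of real dimension at most $\dim_{\mathbb{R}} \mathcal{M}^d(G) - 4$, while $\mathcal{M}^{d,rs}(G)$ is the open top-dimensional stratum. Given a class $[\gamma] \in \pi_1(\mathcal{M}^d(G), y_0)$ represented by a loop $\gamma \colon [0,1] \to \mathcal{M}^d(G)$ with $\gamma(0)=\gamma(1)=y_0$, I would use stratified transversality to perturb $\gamma$, rel its endpoints, to a loop $\gamma'$ transverse to every stratum of $\Sigma$. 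Since the real dimension of the source $[0,1]$ equals $1$, which is strictly less than the real codimension of each such stratum, transversality forces $\gamma'$ to miss $\Sigma$ entirely, so that $\gamma'([0,1]) \subset \mathcal{M}^{d,rs}(G)$. The perturbation is supported in a small neighborhood of the image, hence $\gamma$ and $\gamma'$ are homotopic in $\mathcal{M}^d(G)$ rel endpoints (this homotopy may cross $\Sigma$, which is harmless); therefore $[\gamma]=[\gamma']$ lies in the image of $\pi_1(\mathcal{M}^{d,rs}(G), y_0) \to \pi_1(\mathcal{M}^d(G), y_0)$. As $[\gamma]$ was arbitrary, the map is surjective.

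The main obstacle I expect is making the transversality step fully rigorous on the singular space $\mathcal{M}^d(G)$, where ordinary smooth transversality at singular points is unavailable. I would handle this by invoking the existence of Whitney stratifications of complex algebraic varieties together with the stratified general-position theorem, which guarantees that a map from the one-dimensional source can be perturbed to be transverse to, and hence avoid, strata of real codimension $\geq 2$; alternatively, one may simply cite the standard statement that the inclusion of a nonempty Zariski-open subset of an irreducible variety induces a surjection on fundamental groups. The remaining ingredients---normality forcing codimension $\geq 2$, density of the open locus, and the smallness of the homotopy---are routine.
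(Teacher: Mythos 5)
Your overall strategy (push loops off a small complement) is the right instinct, but the core step as you state and justify it is genuinely false, and the failure is exactly at the point you flagged as the ``main obstacle.'' The claim that for an arbitrary complex algebraic variety $Y$ and closed $Z \subset Y$ of real codimension $\geq 2$ the inclusion $Y \setminus Z \hookrightarrow Y$ is $\pi_1$-surjective has elementary counterexamples: for the nodal cubic $Y$ (homotopy equivalent to $S^2 \vee S^1$) with $Z$ the node, $\pi_1(Y\setminus Z) \cong \mathbb{Z}$ maps to zero in $\pi_1(Y)\cong\mathbb{Z}$; and even with real codimension $4$, take $Y$ to be $\mathbb{C}^2$ with two points identified, where $\pi_1(Y\setminus\{\mathrm{sing}\})=0$ but $\pi_1(Y)\cong\mathbb{Z}$. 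Your transversality mechanism breaks for the same reason in both examples: stratified general position theorems perturb maps into a \emph{smooth} ambient manifold containing a Whitney stratified subset, whereas here the perturbation must stay inside the singular space $\mathcal{M}^d(G)$ itself. Near a singular point, a small punctured (or singular-locus-deleted) neighborhood of a general variety need not be connected, so a path crossing the singular locus cannot be rerouted around it, no matter how large the codimension. Your proposal invokes normality only to count codimension, so it would equally ``prove'' the false statement for the pinched plane above; hence the argument, as written, does not go through. Likewise your fallback citation --- that a nonempty Zariski-open subset of an \emph{irreducible} variety induces a $\pi_1$-surjection --- is false without a normality (unibranch) hypothesis, by the nodal cubic again.

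The statement you need is: for an irreducible \emph{normal} complex variety $Y$ and any proper closed subvariety $Z$, the map $\pi_1(Y\setminus Z) \to \pi_1(Y)$ is surjective (codimension $\geq 2$ is not even required). Normality enters not through codimension but through local irreducibility: points of a normal variety admit arbitrarily small contractible cone neighborhoods $V$ with $V \setminus Z$ connected, which is what lets you reroute each subarc of a loop crossing $Z$ rel endpoints. The paper takes a different, resolution-based route that sidesteps singular transversality entirely: it chooses a desingularization $f: \widehat{\mathcal{M}^d(G)} \to \mathcal{M}^d(G)$, notes that Zariski's main theorem (which uses normality of $\mathcal{M}^d(G)$) gives connected fibers, hence $f_*$ is $\pi_1$-surjective; upstairs, where the variety is smooth, the standard codimension-$\geq 2$ perturbation argument legitimately shows $\pi_1(f^{-1}(\mathcal{M}^{d,rs}(G))) \to \pi_1(\widehat{\mathcal{M}^d(G)})$ is surjective; and $f$ restricts to an isomorphism over the smooth locus $\mathcal{M}^{d,rs}(G)$, so composing gives the result. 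In other words, your general-position idea is sound only after lifting to a smooth model --- that lift, plus Zariski's main theorem to descend, is precisely what your write-up is missing.
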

\begin{proof}
    We know that the regularly stable locus
    \[
    \mathcal{M}^{d,rs}(G) \subset \mathcal{M}^d(G)
    \]
    is the smooth locus of $\mathcal{M}^d(G)$ and the moduli space $\mathcal{M}^d(G)$ is a normal projective variety. We want to show that the homomorphism
    \[
    \pi_1(\mathcal{M}^{d,rs}(G)) \longrightarrow \pi_1(\mathcal{M}^d(G))
    \]
    induced from the inclusion $\mathcal{M}^{d,rs}(G) \xhookrightarrow{} \mathcal{M}^d(G)$ is surjective. Now consider a desingularization
    \[
    f: \widehat{\mathcal{M}^d(G)} \longrightarrow \mathcal{M}^d(G).
    \]
    Then Zariski's main theorem tells us that all fibers of $f$ are connected (see \cite[p. $280$, Corollary $11.4$]{H77}). As a consequence, the induced homomorphism
    \[
    f_*: \pi_1(\widehat{\mathcal{M}^d(G)}) \longrightarrow \pi_1(\mathcal{M}^d(G))
    \]
    is surjective. Moreover, since $\mathcal{M}^{d,rs}(G)$ is smooth, the induced map
    \[
    \pi_1(f^{-1}(\mathcal{M}^{d,rs}(G)) \longrightarrow \pi_1(\widehat{\mathcal{M}^d(G)})
    \]
    is surjective. Since the restriction map
    \[
    \left.f\right|_{f^{-1}(\mathcal{M}^{d,rs}(G))}: f^{-1}(\mathcal{M}^{d,rs}(G)) \longrightarrow \mathcal{M}^{d,rs}(G)
    \]
    is an isomorphism, by combining these morphisms we get that
    \[
    \pi_1(\mathcal{M}^{d,rs}(G)) \longrightarrow \pi_1(\mathcal{M}^d(G))
    \]
    is surjective.
\end{proof}

\begin{theorem}\label{maintheorem}
   Let $X$ be a smooth projective curve of $g\geq 3$ and $G$ a connected reductive affine algebraic group over $\mathbb{C}$. Let $\mathcal{M}^{d,rs}(G)$ (resp. $\mathcal{M}^{d,rs}_\mathrm{conn}(G)$) denote the moduli spaces of regularly stable $G$-bundles (resp. $G$-connections) of topological type $d\in \pi_1(G)$ over $X$. Then
   \[
   \pi_k(\mathcal{M}^{d,rs}_\mathrm{conn}(G)) \cong \pi_k(\mathcal{M}^{d,rs}(G))
   \]
   for all $k=0,\dots, 2g-4$.
\end{theorem}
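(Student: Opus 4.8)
The plan is to compare both moduli spaces with the intermediate space $\mathcal{U}^{d,rs}_{\mathrm{conn}}(G)$ through the two maps already introduced: the forgetful morphism $p \colon \mathcal{U}^{d,rs}_{\mathrm{conn}}(G) \to \mathcal{M}^{d,rs}(G)$ and the open inclusion $\mathcal{U}^{d,rs}_{\mathrm{conn}}(G) \hookrightarrow \mathcal{M}^{d,rs}_{\mathrm{conn}}(G)$. I would show that the first induces isomorphisms on all homotopy groups, while the second induces isomorphisms in the range $k \le 2g-4$; composing the two isomorphisms then yields the theorem.

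For the map $p$, I would first identify its fibres. Over $[E] \in \mathcal{M}^{d,rs}(G)$ the fibre is the set of regularly stable connections whose underlying bundle is $E$. Since $E$ is regularly stable we have $\mathrm{Aut}(E) = Z(G)$, and $Z(G)$ acts trivially on the Atiyah sequence (\ref{Atiyah}), so every holomorphic connection $\nabla$ on $E$ automatically gives a regularly stable connection $(E,\nabla)$ with $\mathrm{Aut}(E,\nabla) = Z(G)$; moreover such $\nabla$ exist because $E$ is semistable, by \cite[Theorem $4.1$]{AB02}. Hence the fibre is the full affine space modelled on $H^0(X, K_X \otimes \mathrm{ad}(E))$. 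As $E$ ranges over the regularly stable locus, $h^0(X,\mathrm{ad}(E)) = \dim Z(G)$ is constant, so by Serre duality and Riemann--Roch $h^0(X, K_X \otimes \mathrm{ad}(E)) = h^1(X,\mathrm{ad}(E))$ is constant as well. Thus $p$ is an affine bundle (a torsor under the vector bundle with these fibres), hence a fibration with contractible fibres, and therefore a homotopy equivalence. In particular $\pi_k(\mathcal{U}^{d,rs}_{\mathrm{conn}}(G)) \cong \pi_k(\mathcal{M}^{d,rs}(G))$ for every $k$.

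For the open inclusion, write $Z \coloneqq \mathcal{M}^{d,rs}_{\mathrm{conn}}(G) \setminus \mathcal{U}^{d,rs}_{\mathrm{conn}}(G)$, the locus of regularly stable connections whose underlying $G$-bundle fails to be regularly stable. If I can show that $Z$ is a closed subvariety of complex codimension at least $g-1$, then it has real codimension at least $2g-2$ inside the smooth manifold $\mathcal{M}^{d,rs}_{\mathrm{conn}}(G)$; stratifying $Z$ into smooth strata and applying the standard transversality/general-position argument then shows that the inclusion of the complement induces isomorphisms $\pi_k(\mathcal{U}^{d,rs}_{\mathrm{conn}}(G)) \cong \pi_k(\mathcal{M}^{d,rs}_{\mathrm{conn}}(G))$ for all $k \le (2g-2)-2 = 2g-4$. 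Combined with the previous paragraph, this gives precisely the claimed range.

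The main obstacle is exactly this codimension estimate for $Z$. Here I would stratify $Z$ according to the instability (Harder--Narasimhan) type of the underlying bundle, treating separately the Biswas--Hoffmann locus of stable-but-not-regularly-stable bundles, which already has large codimension. For the genuinely unstable strata the key input is flatness: for $(E,\nabla) \in Z$ with canonical reduction $E_P$, the reduction cannot be $\nabla$-invariant, for otherwise the destabilising quotient would inherit a connection and hence have degree $0$, contradicting instability. Therefore the second fundamental form of $\nabla$ with respect to $E_P$ is a nonzero $\mathcal{O}_X$-linear section of an associated bundle twisted by $K_X$, which sharply constrains the admissible connections on each stratum. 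Counting, for each parabolic $P$, the dimension of the relevant family of unstable bundles together with the space of such connections, and comparing with $\dim \mathcal{M}^{d,rs}_{\mathrm{conn}}(G) = 2(g-1)\dim_{\mathbb{C}} G$, should yield codimension at least $g-1$; carrying out this count carefully and uniformly over all $P$ is the technical heart of the argument.
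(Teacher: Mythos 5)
Your proposal reproduces the paper's proof essentially step for step: the same intermediate open set $\mathcal{U}^{d,rs}_{\mathrm{conn}}(G)$, the same identification of the forgetful map $p$ as a torsor under the cotangent bundle of $\mathcal{M}^{d,rs}(G)$ with contractible fibres (giving $\pi_k$-isomorphisms in all degrees), and the same codimension $\geq g-1$ bound on the complement combined with the general-position argument to conclude isomorphisms for $k \leq 2g-4$. The only divergence is that the paper obtains the codimension estimate by appealing to \cite[Theorem II.$6$]{F93} and \cite[Theorem $2.5$]{BH12a}, whereas you outline the Harder--Narasimhan/second-fundamental-form count directly, which is in fact the argument underlying those references.
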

\begin{proof}
    Consider the subset
    \[
\mathcal{U}^{d,rs}_\mathrm{conn}(G) \subset \mathcal{M}^{d,rs}_\mathrm{conn}(G)
    \]
  which consists of regularly stable $G$-connections $(E,\nabla)$ such that the underlying $G$-bundle $E$ is regularly stable. Then we have the following map
  \begin{align}\label{forgetful}
  \begin{split}
       p: \mathcal{U}^{d,rs}_\mathrm{conn}(G) &\longrightarrow \mathcal{M}^{d,rs}(G)\\
       (E,\nabla) &\longmapsto E
      \end{split} 
  \end{align}
which forgets the connection structure on the $G$-bundle. Following \cite[Lemma IV.$4$]{F93}, the map $p$ in (\ref{forgetful}) is isomorphic to the torsor of connections on the line bundle
\[
\mathcal{T} \longrightarrow \mathcal{M}^{d,rs}(G)
\]
whose fibers over $E$ are given by $\det H^1(X, \mathrm{ad}(E))$. Since the tangent space of $\mathcal{M}^{d,rs}(G)$ at $E$ is isomorphic to $H^1(X, \mathrm{ad}(E))$ and by Serre duality
\[
H^0(X,\mathrm{ad}(E) \otimes K_X) \cong H^1(X,\mathrm{ad}(E))^\vee,
\]
the cotangent space of $\mathcal{M}^{d,rs}(G)$ at $E$ is isomorphic to $H^0(X,\mathrm{ad}(E) \otimes K_X)$. Since the cotangent space $T^*_E(\mathcal{M}^{d,rs}(G))$ acts on $p^{-1}(E)$ freely and transitively, we conclude that the map $p$ in (\ref{forgetful}) is a torsor for the cotangent bundle of $\mathcal{M}^{d,rs}(G)$. Therefore, the fibers $p^{-1}(E)$ are connected and contractible. Take into account the subsequent exact sequence of homotopy groups
\[
 \pi_k(p^{-1}(E)) \rightarrow  \pi_k(\mathcal{U}^{d,rs}_\mathrm{conn}(G)) \rightarrow \pi_k(\mathcal{M}^{d,rs}(G)) \rightarrow   \pi_{k-1}(p^{-1}(E)),
\]
it directly implies that
\begin{equation}\label{combine1}
     \pi_k(\mathcal{U}^{d,rs}_\mathrm{conn}(G)) \cong \pi_k(\mathcal{M}^{d,rs}(G))
\end{equation}
for all $k\geq 0$. By a similar argument as in \cite[Theorem II.$6$]{F93} and \cite[Theorem $2.5$]{BH12a}, we get that the codimension
\[
\mathrm{codim}\big(\mathcal{M}^{d,rs}_\mathrm{conn}(G) \setminus \mathcal{U}^{d,rs}_\mathrm{conn}(G)\big) \geq g-1.
\]
Since $\mathcal{M}^{d,rs}(G)$ is smooth, we obtain that
\begin{equation}\label{combine2}
    \pi_k(\mathcal{M}^{d,rs}_\mathrm{conn}(G)) \cong \pi_k(\mathcal{U}^{d,rs}_\mathrm{conn}(G))
\end{equation}
for $k=0,\dots, 2g-4$. Thus, combining (\ref{combine1}) and (\ref{combine2}), we get
\[
\pi_k(\mathcal{M}^{d,rs}_\mathrm{conn}(G)) \cong \pi_k(\mathcal{M}^{d,rs}(G))
\]
for $k=0,\dots, 2g-4$.
\end{proof}
\begin{remark}
    The fundamental group of the moduli space $\mathcal{M}^{d,rs}(G)$ of principal $G$-bundles was described in \cite{BMP21}. Therefore, using the above isomorphism we can describe the fundamental group of the moduli space of $G$-connections.
\end{remark}

\section{$\mathrm{SL}_n$ case}\label{lastsection}

In this section, we restrict our attention to the groups $G=\mathrm{GL}(n,\mathbb{C})$ and $\mathrm{SL}(n,\mathbb{C})$. For $G=\mathrm{GL}(n,\mathbb{C})$, the homotopy groups were computed in \cite{BGG08}. So, we consider the case $G=\mathrm{SL}(n,\mathbb{C})$. Let $\mathcal{M}_{\mathrm{conn}}(n,\xi)$ denote the moduli space of semistable connections of rank $n$ and determinant $\xi$ over $X$. If $(n,\deg(\xi))=1$, then $\mathcal{M}_{\mathrm{conn}}(n,\xi)$ is smooth and can be naturally identified with the stable locus.

\begin{theorem}\label{homotopy} Let $X$ be a smooth projective curve of $g\geq 3$ and let $\xi$ be a line bundle of degree $d$ such that $(n,d)=1$. Then
    \begin{enumerate} 
        \item $\pi_1(\mathcal{M}_{\mathrm{conn}}(n,\xi)) = 0;$
        \item $\pi_2(\mathcal{M}_{\mathrm{conn}}(n,\xi)) \cong \mathbb{Z};$
        \item $\pi_k(\mathcal{M}_{\mathrm{conn}}(n,\xi)) \cong \pi_{k-1}(\mathfrak{G})$ for all $2< k \leq 2(n-1)(g-1)-2$, where $\mathfrak{G}$ is the unitary gauge group.
    \end{enumerate}
\end{theorem}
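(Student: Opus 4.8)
The plan is to use Theorem \ref{maintheorem} to replace the moduli of connections by the moduli of bundles and then read off the homotopy of the latter by the gauge-theoretic method of Atiyah--Bott. Since $(n,d)=1$, stability, regular stability and smoothness all coincide, so $\mathcal{M}_{\mathrm{conn}}(n,\xi)$ is smooth and Theorem \ref{maintheorem} applies with $G=\mathrm{SL}(n,\mathbb{C})$ and fixed determinant. For $k=1,2$ the range $k\le 2g-4$ is nonempty (as $g\ge 3$ gives $2g-4\ge 2$), so $\pi_k(\mathcal{M}_{\mathrm{conn}}(n,\xi))\cong\pi_k(\mathcal{M}(n,\xi))$, where $\mathcal{M}(n,\xi)$ is the moduli space of stable bundles of rank $n$ with determinant $\xi$. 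Combined with the classical fact (going back to Narasimhan--Ramanan and Atiyah--Bott) that $\mathcal{M}(n,\xi)$ is simply connected with $\pi_2(\mathcal{M}(n,\xi))\cong\mathbb{Z}$, this gives (1) and (2) at once.

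For (3) I would first extend the range of the isomorphism $\pi_k(\mathcal{M}_{\mathrm{conn}}(n,\xi))\cong\pi_k(\mathcal{M}(n,\xi))$ beyond $2g-4$. In the proof of Theorem \ref{maintheorem} the forgetful torsor $\mathcal{U}^{d,rs}_\mathrm{conn}\to\mathcal{M}(n,\xi)$ has contractible fibres, which already gives $\pi_k(\mathcal{U}^{d,rs}_\mathrm{conn})\cong\pi_k(\mathcal{M}(n,\xi))$ for all $k$; the only restriction came from the codimension of the complement $\mathcal{M}_{\mathrm{conn}}(n,\xi)\setminus\mathcal{U}^{d,rs}_\mathrm{conn}$, i.e. the locus where the underlying bundle fails to be stable. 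Specialising the codimension estimate of \cite{F93,BH12a} to $G=\mathrm{SL}(n,\mathbb{C})$ should sharpen the general bound $g-1$ to $(n-1)(g-1)$, so that the isomorphism $\pi_k(\mathcal{M}_{\mathrm{conn}}(n,\xi))\cong\pi_k(\mathcal{M}(n,\xi))$ holds for all $k\le 2(n-1)(g-1)-2$.

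Next I would compute $\pi_k(\mathcal{M}(n,\xi))$ in this range. Writing $\mathcal{C}$ for the contractible affine space of holomorphic structures with fixed determinant $\xi$ and $\mathcal{C}^s\subset\mathcal{C}$ for its stable locus, the complex gauge group $\mathfrak{G}^{\mathbb{C}}$ acts on $\mathcal{C}^s$ with constant stabiliser the centre $Z\cong\mu_n$, so $\mathcal{C}^s\to\mathcal{M}(n,\xi)$ is a principal $\mathfrak{G}^{\mathbb{C}}/Z$-bundle. The Harder--Narasimhan/Shatz stratification shows $\mathcal{C}\setminus\mathcal{C}^s$ has complex codimension $>(n-1)(g-1)$, so $\mathcal{C}^s$ is contractible through the relevant range, and the long exact homotopy sequence of this fibration collapses to $\pi_k(\mathcal{M}(n,\xi))\cong\pi_{k-1}(\mathfrak{G}^{\mathbb{C}}/Z)$. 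Since $\mathrm{SU}(n)\hookrightarrow\mathrm{SL}(n,\mathbb{C})$ is a homotopy equivalence the inclusion $\mathfrak{G}\hookrightarrow\mathfrak{G}^{\mathbb{C}}$ is one as well, and because $Z$ is finite and central one has $\pi_{k-1}(\mathfrak{G}^{\mathbb{C}}/Z)\cong\pi_{k-1}(\mathfrak{G})$ for $k-1\ge 2$. This yields $\pi_k(\mathcal{M}_{\mathrm{conn}}(n,\xi))\cong\pi_{k-1}(\mathfrak{G})$ for $2<k\le 2(n-1)(g-1)-2$, which is (3).

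The step I expect to be most delicate is the sharp codimension bookkeeping: verifying that the specialisation of \cite{F93,BH12a} to $\mathrm{SL}(n,\mathbb{C})$ indeed produces codimension $(n-1)(g-1)$ for the non-stable locus in the connection moduli, and that this (rather than the strictly larger unstable codimension on the bundle side) is the binding constraint, so that the range in (3) is exactly $2(n-1)(g-1)-2$. A secondary subtlety, needed to keep (2) consistent with the gauge-theoretic sequence, is the centre bookkeeping $0\to\pi_1(\mathfrak{G}^{\mathbb{C}})\to\pi_1(\mathfrak{G}^{\mathbb{C}}/Z)\to\mu_n\to 0$: since $\mathrm{SU}(n)$ is $2$-connected with $\pi_3\cong\mathbb{Z}$ one has $\pi_1(\mathfrak{G})\cong\mathbb{Z}$ coming from the top cell of $X$, and the extension is multiplication by $n$ on $\mathbb{Z}$, so $\pi_1(\mathfrak{G}^{\mathbb{C}}/Z)\cong\mathbb{Z}$ rather than $\mathbb{Z}\oplus\mu_n$.
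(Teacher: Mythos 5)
Your proposal is correct and follows essentially the same route as the paper: the paper likewise reduces to the open locus $\mathcal{U}_{\mathrm{conn}}(n,\xi)$ of connections with stable underlying bundle via the torsor argument recycled from the proof of Theorem \ref{maintheorem}, invokes the codimension bound $\mathrm{codim}\big(\mathcal{M}_{\mathrm{conn}}(n,\xi)\setminus\mathcal{U}_{\mathrm{conn}}(n,\xi)\big)\geq (n-1)(g-1)$ (citing \cite{BM07,BM09} rather than specializing \cite{F93,BH12a}), and then concludes all three parts at once from \cite[Theorem 3.2]{DU95}. Your gauge-theoretic computation of $\pi_k(\mathcal{M}(n,\xi))$ --- the Harder--Narasimhan stratification of the space of holomorphic structures, the $\mathfrak{G}^{\mathbb{C}}/Z$-fibration, and the center bookkeeping --- is precisely the content of that cited theorem, so it unpacks, rather than replaces, the paper's final step.
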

\begin{proof}
    Consider 
    \[
    \mathcal{U}_{\mathrm{conn}}(n,\xi) \subset \mathcal{M}_{\mathrm{conn}}(n,\xi),
    \]
    which consists of all connections $(E,\nabla)$ such that the underlying vector bundle $E$ is semistable. By remark \ref{remark}, we know that the regularly stable locus is the same as the stable locus for $G= \mathrm{SL}(n,\mathbb{C})$, and since $(n,\deg(\xi))=1$, the stable locus coincides with the semistable locus. Now, from the proof of Theorem \ref{maintheorem}, we get that 
    \[
    \pi_k(\mathcal{U}_{\mathrm{conn}}(n,\xi)) \cong \pi_k(\mathcal{M}(n,\xi))
    \]
    for all $k \geq 0$ (see (\ref{combine1})), where $\mathcal{M}(n,\xi)$ denote the moduli space of semistable bundles of rank $n$ and determinant $\xi$ over $X$. Following similar arguments as in \cite[Lemma $3.1$]{BM07} and \cite[Lemma $3.1$]{BM09}, we get that the codimension
    \[
    \mathrm{codim}(\mathcal{M}_{\mathrm{conn}}(n,\xi)\setminus \mathcal{U}_{\mathrm{conn}}(n,\xi)) \geq (n-1)(g-1).
    \]
    Therefore,
    \[
    \pi_k(\mathcal{M}_{\mathrm{conn}}(n,\xi)) \cong \pi_k(\mathcal{U}_{\mathrm{conn}}(n,\xi)) \cong \pi_k(\mathcal{M}(n,\xi))
    \]
    for all $k=0,\dots, 2(n-1)(g-1)-2$. Then the result follows from \cite[Theorem $3.2$]{DU95} (here the assumption is not made that $n$ and $d$ are coprime).
\end{proof}

Let $\mathrm{Tor}_j(\mathcal{M})$ (resp. $\mathrm{Tor}_j(\mathcal{M}_{\mathrm{conn}})$) denote the torsion part of the cohomology group $H^j(\mathcal{M}(n, \xi)$ (resp. $H^j(\mathcal{M}_{\mathrm{conn}}(n, \xi)$).
\begin{theorem}\label{integer}
    The mixed Hodge structures on the cohomology groups (modulo torsion) $H^j(\mathcal{M}_{\mathrm{conn}}(n, \xi),\mathbb{Z})/\mathrm{Tor}_j(\mathcal{M}_{\mathrm{conn}})$ and $H^j(\mathcal{M}(n, \xi),\mathbb{Z})/\mathrm{Tor}_j(\mathcal{M})$ are isomorphic and pure, for all $j \leq 2(n-1)(g-1) - 1$.
\end{theorem}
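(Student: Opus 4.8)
The plan is to factor the comparison through the open subvariety $\mathcal{U}_{\mathrm{conn}}(n,\xi) \subset \mathcal{M}_{\mathrm{conn}}(n,\xi)$ of connections whose underlying bundle is semistable, exactly as in the proofs of Theorems \ref{maintheorem} and \ref{homotopy}, and to rerun those two comparisons over $\mathbb{Z}$ (modulo torsion) while keeping track of the mixed Hodge structures. Write $c := (n-1)(g-1)$, so the asserted range is $j \le 2c-1$. The one structural input driving purity is that $\mathcal{M}(n,\xi)$ is smooth and \emph{projective} when $(n,d)=1$ (Remark \ref{remark}), so $H^j(\mathcal{M}(n,\xi),\mathbb{Q})$ is pure of weight $j$; this purity is what I will transport along both comparison maps.

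First I would treat the forgetful map $p \colon \mathcal{U}_{\mathrm{conn}}(n,\xi) \to \mathcal{M}(n,\xi)$. As recalled in the proof of Theorem \ref{maintheorem}, $p$ is a torsor under the cotangent bundle $T^{*}\mathcal{M}(n,\xi)$, hence a Zariski-locally trivial affine bundle with fibre $\mathbb{A}^{N}$, where $N=\dim\mathcal{M}(n,\xi)$. Such a bundle is a fibration with contractible fibres, so $p$ is a homotopy equivalence and $p^{*}$ is an isomorphism on $H^{j}(-,\mathbb{Z})$ in every degree; being pullback along an algebraic morphism, $p^{*}$ is also a morphism of mixed Hodge structures, hence an isomorphism of mixed Hodge structures on the torsion-free quotients. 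In particular $H^{j}(\mathcal{U}_{\mathrm{conn}}(n,\xi),\mathbb{Z})/\mathrm{Tor}$ is pure of weight $j$ for all $j$.

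Next I would compare $\mathcal{U}_{\mathrm{conn}}(n,\xi)$ with the smooth variety $\mathcal{M}_{\mathrm{conn}}(n,\xi)$ across the complement $Z := \mathcal{M}_{\mathrm{conn}}(n,\xi)\setminus\mathcal{U}_{\mathrm{conn}}(n,\xi)$, which has complex codimension $\ge c$ by the estimate used in Theorem \ref{homotopy} (following \cite{BM07, BM09}). Since $Z$ is closed of codimension $\ge c$ in a smooth variety, the integral local cohomology $H^{j}_{Z}(\mathcal{M}_{\mathrm{conn}}(n,\xi),\mathbb{Z})$ vanishes for $j \le 2c-1$: by Poincaré–Lefschetz duality on the smooth ambient space it is a Borel–Moore homology group of $Z$ in a degree exceeding the real dimension of $Z$. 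The long exact sequence of the pair then makes restriction $H^{j}(\mathcal{M}_{\mathrm{conn}}(n,\xi),\mathbb{Z}) \to H^{j}(\mathcal{U}_{\mathrm{conn}}(n,\xi),\mathbb{Z})$ an isomorphism for $j \le 2c-2$ and injective for $j = 2c-1$; these are restrictions along an open immersion, hence morphisms of mixed Hodge structures. Composing with $p^{*}$ identifies the two sides mod torsion as mixed Hodge structures for $j \le 2c-2$ and transports purity.

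The remaining and most delicate point is the boundary degree $j = 2c-1$, where restriction is a priori only injective. Here I would argue by weights: by the same duality, $H^{2c}_{Z}(\mathcal{M}_{\mathrm{conn}}(n,\xi),\mathbb{Q})$ is the top Borel–Moore homology of $Z$, Tate-twisted, hence pure of weight $2c$, while $H^{2c-1}(\mathcal{U}_{\mathrm{conn}}(n,\xi),\mathbb{Q}) \cong H^{2c-1}(\mathcal{M}(n,\xi),\mathbb{Q})$ is pure of weight $2c-1$. The connecting map $H^{2c-1}(\mathcal{U}_{\mathrm{conn}}) \to H^{2c}_{Z}$ is then a morphism of pure Hodge structures of different weights, hence zero, so restriction is surjective and rationally an isomorphism at $j=2c-1$ as well. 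I expect the genuine obstacle to be upgrading this top-degree statement from $\mathbb{Q}$ to $\mathbb{Z}/\mathrm{Tor}$: the integral cokernel at $j=2c-1$ injects into $H^{2c}_{Z}(-,\mathbb{Z})$ and is finite after the weight argument, so one must still rule out a nontrivial finite index, e.g. by showing the relevant integral local cohomology carries no torsion obstructing surjectivity. Granting this, composing the two integral comparisons with $p^{*}$ yields the isomorphism of mixed Hodge structures modulo torsion throughout $j \le 2c-1$, with purity inherited from $\mathcal{M}(n,\xi)$.
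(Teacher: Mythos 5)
Your skeleton is exactly the paper's: the same factorization through $\mathcal{U}_{\mathrm{conn}}(n,\xi)$, the same pullback isomorphism along the forgetful torsor (the paper's $\pi^*$ in (\ref{isom})), and the same long exact sequence of the pair combined with the codimension bound $\mathrm{codim}\, Z \geq c := (n-1)(g-1)$. Where you genuinely diverge is at the boundary degree $j = 2c-1$, and there your version is in fact \emph{more} careful than the paper's. The paper asserts that $H^j(\mathcal{M}_{\mathrm{conn}}(n,\xi), \mathcal{U}_{\mathrm{conn}}(n,\xi), \mathbb{Z}) = 0$ for all $j \leq 2c$, but codimension $\geq c$ alone only gives vanishing for $j \leq 2c-1$: by duality the group at $j = 2c$ is the top Borel--Moore homology $H^{\mathrm{BM}}_{2N-2c}(Z,\mathbb{Z})$ (with $N = \dim \mathcal{M}_{\mathrm{conn}}(n,\xi)$), which is nonzero whenever the codimension is exactly $c$. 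So the paper's conclusion that the restriction $\varphi$ is an isomorphism at $j = 2c-1$ silently uses vanishing it has not justified, and your weight argument is precisely the missing supplement: the connecting map goes from $H^{2c-1}(\mathcal{U}_{\mathrm{conn}},\mathbb{Q})$, pure of weight $2c-1$ via the affine-bundle isomorphism with the smooth projective $\mathcal{M}(n,\xi)$, into $H^{2c}_Z$, pure of weight $2c$, hence vanishes rationally. You also make explicit the purity input ($\mathcal{M}(n,\xi)$ smooth projective for $(n,d)=1$), which the paper's proof leaves entirely implicit despite purity being part of the theorem's statement.

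The one gap you flag --- upgrading surjectivity at $j = 2c-1$ from $\mathbb{Q}$ to $\mathbb{Z}$ modulo torsion --- closes immediately with a standard fact: $H^{2c}_Z(\mathcal{M}_{\mathrm{conn}}(n,\xi),\mathbb{Z}) \cong H^{\mathrm{BM}}_{2N-2c}(Z,\mathbb{Z})$ is a \emph{top-degree} integral Borel--Moore homology group, hence torsion-free (it is the free abelian group on the irreducible components of $Z$ of dimension exactly $N-c$, and vanishes if all components have smaller dimension). A map into a torsion-free group that vanishes rationally vanishes integrally, so the connecting map is zero over $\mathbb{Z}$, and $\varphi$ is surjective, hence an isomorphism, at $j = 2c-1$ with integer coefficients. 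With that one sentence inserted, your argument is complete, reaches the full claimed range $j \leq 2c-1$, and in fact repairs a small overstatement in the paper's own proof rather than merely reproducing it.
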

\begin{proof}
    The morphism
    \begin{equation}\label{isom}
        \pi^* : H^j(\mathcal{M}(n, \xi), \mathbb{Z}) \longrightarrow H^j(\mathcal{U}_{\mathrm{conn}}(n, \xi), \mathbb{Z})
    \end{equation}
    induced by the forgetful map $\pi: \mathcal{U}_{\mathrm{conn}}(n, \xi) \longrightarrow \mathcal{M}(n, \xi)$ is an isomorphism for all $j\geq 0$, as $\pi$ is a torsor with contractible fibers. Consequently, this establishes an isomorphism between two mixed Hodge structures: $H^j(\mathcal{M}(n, \xi), \mathbb{Z})/\mathrm{Tor}_j(\mathcal{M})$ and $H^j(\mathcal{U}_{\mathrm{conn}}(n, \xi), \mathbb{Z})/\mathrm{Tor}_j(\mathcal{U}_{\mathrm{conn}})$, where $\mathrm{Tor}_j(\mathcal{U}_{\mathrm{conn}})$ represents the torsion part of $H^j(\mathcal{U}_{\mathrm{conn}}(n, \xi), \mathbb{Z})$. The relative cohomologies of the pair $(\mathcal{M}_{\mathrm{conn}}(n, \xi), \mathcal{U}_{\mathrm{conn}}(n, \xi))$ induces a long exact sequence
    \begin{alignat}{2}\label{les}
    \begin{split}
    \cdots &\rightarrow H^j(\mathcal{M}_{\mathrm{conn}}(n, \xi), \mathcal{U}_{\mathrm{conn}}(n, \xi), \mathbb{Z}) \rightarrow H^j(\mathcal{M}_{\mathrm{conn}}(n, \xi), \mathbb{Z}) \xrightarrow{\varphi} H^j(\mathcal{U}_{\mathrm{conn}}(n, \xi), \mathbb{Z}) \rightarrow \\ &\rightarrow H^{j+1}(\mathcal{M}_{\mathrm{conn}}(n, \xi), \mathcal{U}_{\mathrm{conn}}(n, \xi), \mathbb{Z}) \rightarrow \cdots, 
    \end{split}
    \end{alignat}
    where $\varphi$ is induced from the inclusion map and it is a morphism of mixed Hodge structures by \cite[p. $43$]{D75}. Since
    \[
    \mathrm{codim}(\mathcal{M}_{\mathrm{conn}}(n,\xi)\setminus \mathcal{U}_{\mathrm{conn}}(n,\xi)) \geq (n-1)(g-1),
    \]
    we get 
    \[
    H^j(\mathcal{M}_{\mathrm{conn}}(n,\xi), \mathcal{U}_{\mathrm{conn}}(n,\xi), \mathbb{Z})=0
    \]
    for all $0\leq j \leq 2(n-1)(g-1)$. Thus $\varphi$ in (\ref{les}) is an isomorphism and hence the composition map with $\pi^*$ as in (\ref{isom})
    \[
    \varphi^{-1} \circ \pi^* : H^j(\mathcal{M}(n, \xi), \mathbb{Z}) \longrightarrow H^j(\mathcal{M}_{\mathrm{conn}}(n, \xi), \mathbb{Z})
    \]
    is an isomorphism for all $j \leq 2(n-1)(g-1) -1$. Therefore, we obtain an isomorphism
    \[
    H^j(\mathcal{M}_{\mathrm{conn}}(n, \xi),\mathbb{Z})/\mathrm{Tor}_j(\mathcal{M}_{\mathrm{conn}}) \cong H^j(\mathcal{M}(n, \xi),\mathbb{Z})/\mathrm{Tor}_j(\mathcal{M})
    \]
    which preserves the mixed Hodge structures, for all $j \leq 2(n-1)(g-1) -1$.
\end{proof}

\begin{theorem}{($G=\mathrm{SL}(n,\mathbb{C})$)}
    The (rational) cohomology groups of the fibers of the map $\pi: \mathcal{M}^{d, rs}_\mathrm{Hod}(G) \longrightarrow \mathbb{C}$ (as in (\ref{rsproj})) are isomorphic, preserving the associated mixed Hodge structures. The mixed Hodge structures in these cohomology groups are pure. 

    In particular, the mixed Hodge structures on $H^\bullet(\mathcal{M}_\mathrm{Higgs}^{d, rs}(G), \mathbb{Q})$ and $H^\bullet(\mathcal{M}_\mathrm{conn}^{d, rs}(G), \mathbb{Q})$ are isomorphic and pure.

    These statements hold for the (rational) cohomology groups with compact support.
\end{theorem}
\begin{proof}
    Since, $\mathcal{M}^{d, rs}_\mathrm{Hod}(G)$ is smooth and $\pi: \mathcal{M}^{d, rs}_\mathrm{Hod}(G) \longrightarrow \mathbb{C}$ is a smooth map (i.e. a surjective submersion), from \cite[Theorem $7.2.1$]{HLR11} we get
    \[
    H^\bullet(\pi^{-1}(t_1), \mathbb{Q}) \cong H^\bullet(\pi^{-1}(t_2), \mathbb{Q})
    \]
    which preserves the mixed Hodge structures, for all $t_1,t_2 \in \mathbb{C}$ and the Hodge structures are pure. For $t_1=0$ and $t_2=1$, we obtain
    \[
    H^\bullet(\mathcal{M}_\mathrm{Higgs}^{d, rs}(G), \mathbb{Q}) \cong H^\bullet(\mathcal{M}_\mathrm{conn}^{d, rs}(G), \mathbb{Q})
    \]
    which preserves the mixed Hodge structures and the Hodge structures are pure. The statement concerning compactly supported cohomology groups is guaranteed by the application of the Poincar\'e duality (as stated in \ref{poincare}).
\end{proof}

\section*{Acknowledgment}
 The author is supported by the INSPIRE faculty fellowship (Ref No.: IFA22-MA 186) funded by the DST, Govt. of India.


\begin{thebibliography}{30}

	\bibitem{A57}
		M.F. Atiyah
		\emph{Complex analytic connections in fibre bundles.} Trans. Amer. Math. Soc. 85 (1957), 181-207.

  	\bibitem{AB02}
		H. Azad and I. Biswas,
		\emph{On holomorphic principal bundles over a compact
			Riemann surface admitting a flat connection}, Math. Ann. 322, 333–346 (2002)
		


\bibitem{BH12}
		 I. Biswas and N. Hoffmann, 
		 \emph{A Torelli theorem for moduli spaces of
principal bundles over a curve}, Ann. Inst. Fourier 62 (2012), 87–106.

\bibitem{BH12a}
I. Biswas and N. Hoffmann,
\emph{Poincar\'e families of G-bundles on a curve}, Math. Ann. 352 (2012)

 \bibitem{BGH13}
		I. Biswas, T. Gómez and N. Hoffmann, 
		\emph{Torelli theorem for the Deligne-Hitchin moduli space, II}, Doc. Math. 18 (2013) 1177–1189

\bibitem{BMP21}
  I. Biswas, S. Mukhopadhyay and A. Paul,
  \emph{Fundamental groups of moduli of principal bundles on curves}. Geom Dedicata 214, 629–650 (2021)

 \bibitem{BM07}
            I. Biswas and V. Muñoz,
            \emph{The Torelli theorem for the moduli spaces of connections on a Riemann surface}, Topology 46(2007), no. 3, 295-317

    \bibitem{BM09}
            I. Biswas and V. Muñoz,
            \emph{Torelli theorem for moduli spaces of $\mathrm{SL}(r,\mathbb{C})$-connections on a compact Riemann surface}, Communications in Contemporary Mathematics Vol. 11, No. 01, pp. 1-26 (2009)   

            \bibitem{BGG08}
           S. B. Bradlow, Oscar García-Prada and P. B. Gothen, \emph{Homotopy groups of moduli spaces of representations}, Topology, Volume 47 (2008) no. 4, pp. 203-224

            \bibitem{DU95}
            G.D. Daskalopoulos and K.K. Uhlenbeck,
            \emph{An application of transversality to the topology of the moduli space of stable bundles}, Topology 34 (1995), 203–215.

        \bibitem{D72}
        P. Deligne, 
        \emph{Th\'eorie de Hodge II}. Publ. Math. IHES 40, 1972, 5-57. 

        \bibitem{D75}
        P. Deligne,
        \emph{Th\'eorie de Hodge III}. Publ. Math. IHES 44, 1975, 5-77.

 \bibitem{F93}
  G.Faltings,
  \emph{Stable G–bundles and projective connections}, Jour. Alg. Geom. 2(1993), 507–568


  \bibitem{H77}
  R. Hartshorne,
  \emph{Algebraic geometry}, Graduate Texts in Mathematics, No.52, Springer-Verlag, New York-Heidelberg, 1977. doi: 10.1007/978-1-4757-3849-0.

 \bibitem{HLR11}
  T. Hausel, E. Letellier, and F.Rodriguez Villegas,
  \emph{Arithmetic harmonic analysis on character and quiver varieties}. Duke Math.J., 160 (2): 323–400, 2011.

  \bibitem{HT03}
	 T. Hausel and M. Thaddeus, 
	 \emph{Mirror symmetry, Langlands duality, and the Hitchin system}.
Invent. Math., 153(1):197–229, (2003).


 \bibitem{H87}
 N.J. Hitchin, 
 \emph{Stable bundles and integrable systems}, Duke Math. Jour.
54 (1987), 91–114.

\bibitem{H87a}
N.J. Hitchin,
\emph{The self-duality equations on a Riemann surface}, Proc. Lond. Math. Soc. (3), 55 (1987), pp. 59-126







\bibitem{PS08}
C. Peters and J. Steenbrink, 
\emph{Mixed Hodge Structures}, Ergebnisse der Mathematik und ihrer Grenzgebiete. 3. Folge/A Series of Modern Surveys in Mathematics 52 , Springer, Berlin, 2008.

 \bibitem{R75}
 A. Ramanathan, 
 \emph{Stable principal bundles on a compact Riemann surface}, Math. Ann. 213 (1975), 129–152.

 \bibitem{R96}
 A. Ramanathan,
 \emph{Moduli for principal bundles over algebraic curves I-II}, Proc. Indian Acad. Sci. Math. Sci. 106(1996), 301-28, 421-49.



  \bibitem{S92}
 C.T. Simpson, 
 \emph{Higgs bundles and local systems}, Inst. Hautes ´ Etudes
Sci. Publ. Math. 75 (1992), 5–95.

 \bibitem{Si94a}
      C. T. Simpson, 
      \emph{Moduli of representations of the fundamental group of a smooth projective variety I}. Publi. Math. I.H.E.S., 79:47–129, 1994.

\bibitem{S94}
C.T. Simpson,
\emph{Moduli of representations of the fundamental group of a smooth projective variety. II}, Inst. Hautes Etudes Sci. Publ. Math. 80 (1994), 5-79


    
\end{thebibliography}
\end{document}